\title{Augmented Generalized Happy Functions} 
\author{B. Baker Swart}
\address{Department of Mathematics \& Computer Science, The Citadel, 29409}
\email{breeanne.swart@citadel.edu}
\author{K. A. Beck}
\address{Department of Mathematics \& Computer Science, Saint Mary's College of California, 94575}
\email{kab24@stmarys-ca.edu}
\author{S. Crook}
\address{Division of Mathematics, Engineering, \& Computer Science, Loras College, 52001}
\email{susan.crook@loras.edu}
\author{C. Eubanks-Turner}
\address{Department of Mathematics, Loyola Marymount University, 90045}
\email{ceturner@lmu.edu}
\author{H. G. Grundman}
\address{Department of Mathematics,
Bryn Mawr College, 19010}
\email{grundman@brynmawr.edu}
\author{M. Mei}
\address{Department of Mathematics \& Computer Science, Denison University, 43023}
\email{meim@denison.edu}
\author{L. Zack} 
\address{Department of Mathematics \& Computer Science, High Point University, 27262}
\email{lzack@highpoint.edu}
\thanks{Initial work on the project was supported by the National Science Foundation grant \#DMS 1239280.}
\newcommand{\ZZ}{\mathbb{Z}}
\newcommand{\Znn}{\mathbb{Z}_{\geq 0}}
\newcommand{\Scb}{\ensuremath{S_{[c,b]}}}
\newcommand{\Ucb}{\ensuremath{U_{[c,b]}}}
\newtheorem{theorem}{Theorem}[section]
\newtheorem{conjecture}[theorem]{Conjecture}
\newtheorem{lemma}[theorem]{Lemma}
\theoremstyle{remark}
\newtheorem{definition}[theorem]{Definition}
\keywords{happy numbers, iteration, integer functions}
\subjclass{11A63}
\begin{document}
\begin{abstract}
An augmented generalized happy function, $\Scb$ maps a positive integer to the sum of the squares of its base-$b$ digits and a non-negative integer $c$.  A positive integer $u$ is in a 
{\em cycle} of $\Scb$ if, for some positive integer $k$, $\Scb^k(u) = u$ and for positive integers $v$ and $w$, $v$ is {\em $w$-attracted} for $\Scb$ if, for some non-negative integer $\ell$, $\Scb^\ell(v) = w$. In this paper, we prove that for each $c\geq 0$ and $b \geq 2$, and for any $u$ in a cycle of $\Scb$, (1) if $b$ is even, then there exist arbitrarily long sequences of consecutive $u$-attracted integers and (2) if $b$ is odd, then there exist arbitrarily long sequences of 2-consecutive $u$-attracted integ\end{abstract}

\maketitle

\section{Introduction}\label{S:Introduction}

Letting $S_2$ be the function that takes a positive integer to the sum of the squares of its (base 10) digits, a positive integer $a$ is said to be a \emph{happy number} if $S_2^k(a) = 1$ for some $k\in \ZZ^+$~\cite{guy,honsberger}.
These ideas were generalized in~\cite{genhappy} as follows:
Fix an integer $b \geq 2$, and let $a = \sum_{i=0}^n a_i b^i$, where $0 \leq a_i \leq b-1$ are integers. For each integer $e \geq 2$, define the function $S_{e,b}: {\ZZ}^+ \rightarrow {\ZZ}^+$ by
\[S_{e,b}(a) = S_{e,b}\left(\sum_{i=0}^n a_i b^i\right) = \sum_{i=0}^n a_i^e.\]
If $S_{e,b}^k(a)=1$ for some $k\in\ZZ^+$, then $a$ is called an \emph{$e$-power $b$-happy number}.

We further generalize these functions by allowing the addition of a constant after taking the sum of the powers of the digits. 
(Throughout this work, all parameters are assumed to be integers.)
\begin{definition}\label{D:gen}
Fix integers $c\geq 0$ and $b \geq 2$. Let $a = \sum_{i=0}^n a_i b^i$, where $0 \leq a_i \leq b-1$ are integers. For each integer $e \geq 2$, define the {\it augmented generalized happy function} $S_{e,b,c}: {\ZZ}^+ \rightarrow {\ZZ}^+$, by
\[S_{e,b,c}(a) = c + S_{e,b}(a) = c + \sum_{i=0}^n a_i^e.\]
\end{definition}

In Section~\ref{S:Properties}, we examine various properties of the function $S_{2,b,c}$, which,  
for ease of notation, we denote by $\Scb$.  
In Section~\ref{S:cbGoodness}, we state and prove Theorem~\ref{T:happysequence}, an analogue to the existence of arbitrarily long sequences of consecutive happy numbers.
Although this result is quite general, it leaves a particular case unresolved, which we make explicit in Conjecture~\ref{C:happysequencebetter}, and in Section~\ref{S:conj},
we prove that the conjecture holds for small values of $c$ and $b$.

\section{Properties of $\boldsymbol\Scb$}
\label{S:Properties}

In this section, we consider the function $\Scb = S_{2,b,c}$. 
Note that $S_{[0,10]} = S_2$, as defined in Section~\ref{S:Introduction}. 
\begin{definition}
Fix $c\geq 0$, $b \geq 2$, and $a \geq 1$. We say that $a$ is a {\em fixed point} of
$\Scb$ if $\Scb(a) = a$, and that $a$ is in a {\em cycle} of $\Scb$ if $\Scb^k(a) = a$ for some $k\in \ZZ^+$.  The smallest such $k$ is called the {\em length} of the cycle.
\end{definition}

As is well-known (see, for example,~\cite{honsberger}), $S_2$ has exactly one fixed point and one nontrivial cycle.  The standard proof of this uses a lemma similar to the following, which is a generalization for the function $\Scb$.

\begin{lemma}\label{L:bDescent}
Given $c \geq 0$ and $b \geq 2$, there exists a constant $m$ such that for each $a\geq b^m$,
$\Scb(a) < a$.  In particular, this inequality holds for any $m \in \ZZ^+$ such that
$b^m > b^2 - 3b + 3 + c$.  
\end{lemma}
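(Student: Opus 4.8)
The plan is to bound $\Scb(a)$ from above by a function that grows only logarithmically in $a$, and then observe that such a function is eventually dominated by $a$ itself. First I would write $a = \sum_{i=0}^n a_i b^i$ with $0 \le a_i \le b-1$ and $a_n \ge 1$, so that $a \ge b^n$, i.e. $n \le \log_b a$, and the number of digits is $n+1$. Since each digit satisfies $a_i^2 \le (b-1)^2$, we get the crude bound
\[
\Scb(a) = c + \sum_{i=0}^n a_i^2 \le c + (n+1)(b-1)^2.
\]
This already shows $\Scb(a) \le c + (\lfloor \log_b a\rfloor + 1)(b-1)^2$, which is $o(a)$, so for all sufficiently large $a$ we have $\Scb(a) < a$; choosing $m$ so that $b^m$ exceeds this threshold gives the first assertion. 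The genuine content of the lemma, though, is the explicit value $b^m > b^2 - 3b + 3 + c$, so the real work is sharpening the estimate.

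To get the sharp bound I would treat the leading digit separately, exploiting that $a_n b^n$ contributes a full $b^n$ to $a$ but only $a_n^2$ to $\Scb(a)$. Assume $a \ge b^m$ so that $n \ge m$. Split off the top digit:
\[
\Scb(a) = c + a_n^2 + \sum_{i=0}^{n-1} a_i^2 \le c + a_n^2 + n(b-1)^2,
\]
while $a \ge a_n b^n$. It then suffices to show $c + a_n^2 + n(b-1)^2 < a_n b^n$ for all $1 \le a_n \le b-1$ and all $n \ge m$. For fixed $n$ the left side minus the right side, as a function of $a_n$, is a downward-opening parabola in... rather, $a_n^2 - a_n b^n$ is convex in $a_n$, so on the interval $[1,b-1]$ its maximum occurs at an endpoint; checking $a_n = 1$ and $a_n = b-1$ reduces everything to two one-variable inequalities in $n$. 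The $a_n = 1$ case needs $c + 1 + n(b-1)^2 < b^n$, and the $a_n = b-1$ case needs $c + (b-1)^2 + n(b-1)^2 < (b-1)b^n$; since $b \ge 2$, the first of these is the binding one. So the crux is: for which $m$ does $b^n > n(b-1)^2 + c + 1$ hold for all $n \ge m$?

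The main obstacle — and it is a mild one — is verifying that the single inequality $b^m > b^2 - 3b + 3 + c$ forces $b^n > n(b-1)^2 + c + 1$ for every $n \ge m$, not just $n = m$. I would handle this by induction on $n$: note $b^2 - 3b + 3 + c = (b-1)^2 - (b - 2) + c$, do the base case $n = m$ by hand using the hypothesis on $b^m$, and for the inductive step use that multiplying the left side by $b \ge 2$ at least doubles it, whereas $n(b-1)^2 + c + 1$ increases only additively by $(b-1)^2$; a short computation shows the gap only widens. A little care is needed for the smallest cases (e.g. $b = 2$, where $(b-1)^2 = 1$, or small $m$ where $b^m$ is not much larger than the threshold), but these are finite checks. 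Once the inequality $b^n > n(b-1)^2 + c + 1 \ge n(b-1)^2 + c + a_n^2/a_n \cdot \ldots$ is in hand for all $n \ge m$, chaining back through the two endpoint cases yields $\Scb(a) < a$ for all $a \ge b^m$, as desired.
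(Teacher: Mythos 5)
There is a genuine gap: the reduction at the heart of your ``sharpened'' estimate is too lossy to recover the explicit threshold $b^m > b^2 - 3b + 3 + c$. By bounding $a$ below by $a_n b^n$ alone while charging the full $n(b-1)^2$ for the lower digits' contribution to $\Scb(a)$, you reduce (in your binding case $a_n = 1$) to the inequality $b^n > n(b-1)^2 + c + 1$ for all $n \geq m$. But this does not follow from $b^m > b^2 - 3b + 3 + c$, and it is in fact false in exactly the cases the paper needs: for $b = 10$ and $c = 0$ the hypothesis allows $m = 2$ (since $100 > 73$), yet $10^2 = 100 < 2\cdot 81 + 1 = 163$, so your base case fails and the proposed induction never starts. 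The lemma itself is true there --- e.g.\ $S_{[0,10]}(199) = 163 < 199$ --- the point being that when the lower digits are large they inflate $\Scb(a)$ but they inflate $a$ as well, and the bound $a \geq a_n b^n$ discards that compensation. Your crude first estimate does establish the existence of \emph{some} $m$, but the paper relies on the explicit threshold (for instance, to conclude that $S_{[c,10]}(a) < a$ for all $a \geq 100$ whenever $c < 27$), so the weaker statement is not sufficient.

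The fix is to compare $a$ and $\Scb(a)$ digit by digit rather than in aggregate: write $a - \Scb(a) = \sum_{i=0}^{n} a_i\left(b^i - a_i\right) + a_0(1-a_0) - c$ with the $i=0$ term separated out, and observe that every term with $i \geq 1$ is nonnegative (since $a_i \leq b-1 < b^i$), the leading term is at least $b^n - 1 \geq b^m - 1$, and only the units term can be negative, bounded below by $(b-1)\bigl(1-(b-1)\bigr) = -(b^2 - 3b + 2)$. Summing gives $a - \Scb(a) \geq b^m - b^2 + 3b - 3 - c > 0$ directly, with no induction on $n$ and no case split on $a_n$.
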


\begin{proof}
Let $m\in \ZZ^+$ such that $b^m > b^2 - 3b + 3 + c$ and let $a \geq b^m$.
Then $a = \sum_{i=0}^m a_ib^i$ for some $0\leq a_i \leq b - 1$ with $a_m \neq 0$.
Thus,

\begin{align*}
a-\Scb(a) &= \sum_{i=0}^n a_i b^i - \left(c + \sum_{i=0}^n a_i^2\right) \\
 &= a_n\left(b^n-a_n\right) + \sum_{i=1}^{n-1} a_i \left(b^i-a_i\right) + a_0(1-a_0)-c\\
 & \geq 1\left(b^m-1\right) + 0 + (b-1)(1 - (b-1))-c\\
 & = b^m - b^2 + 3b - 3 - c \\
 & > 0.
\end{align*}
Therefore, $\Scb(a) < a$ for all $a\geq b^m$.
\end{proof}

\begin{table}[tbh]
\begin{center}
{\small
\begin{tabular}{|c|l|}\hline
$c$&Fixed Points and Cycles of $S_{[c,10]}$ \\
\hline\hline
0& $1 \to 1$\\
& $4\to 16 \to 37 \to 58 \to 89 \to 145 \to 42 \to 20 \to 4 $\\
\hline
1& $6\to 37 \to 59 \to 107 \to 51 \to 27 \to 54 \to 42 \to 21 \to 6$\\
& $35 \to 35$\\
& $75 \to 75$ \\
\hline
2& $28 \to 70 \to 51 \to 28$ \\
& $29 \to 87 \to 115 \to 29$ \\
\hline
3& $7 \to 52 \to 32 \to 16 \to 40 \to 19 \to 85 \to 92 \to 88 \to 131 \to 14 \to 20 \to 7$ \\
& $13 \to 13$\\
& $93 \to 93$\\
\hline
4& $6 \to 40 \to 20 \to 8 \to 68 \to 104 \to 21 \to 9 \to 85 \to 93 \to 94 \to 101 \to 6$ \\
& $24 \to 24$\\
& $45 \to 45$\\
& $65 \to 65$\\
& $84 \to 84$\\
\hline
5& $ 15 \to 31 \to 15$\\
& $55 \to 55$\\
\hline
6
& $16 \to 43 \to 31 \to 16$ \\
& $19 \to 88 \to 134 \to 32 \to 19$ \\
\hline
7 & $9 \to 88 \to 135 \to 42 \to 27 \to 60 \to 43 \to 32 \to 20 \to 11 \to 9$\\
& $12 \to 12$\\
& $36 \to 52 \to 36$\\
& $66 \to 79 \to 137 \to 66$ \\
& $92 \to 92$\\
\hline
8
& $26 \to 48 \to 88 \to 136 \to 54 \to 49 \to 105 \to 34 \to 33 \to 26$ \\
\hline
9& $10 \to 10$\\
& $11 \to 11$\\
& $34 \to 34$\\ 
& $46 \to 61 \to 46$\\
& $74 \to 74$\\
& $90 \to 90$\\
& $91 \to 91$\\
\hline
\end{tabular}
\caption{Fixed points and cycles of $S_{[c,10]}$ 
for $0 \leq c \leq 9$\label{T:10cycles}}
}
\end{center}
\end{table}

It follows from Lemma~\ref{L:bDescent} that, for any $c < 27$, for all $a \geq 100$, $S_{[c,10]}(a) <  a$.
We use this result to determine the fixed points and cycles of $S_{[c,10]}$ for $0 \leq c\leq 9$, presenting our results in Table~\ref{T:10cycles}.  

As the table illustrates, varying the constant greatly affects the behavior of $\Scb$ under iteration.  As one would expect, changing the base also changes the behavior, but, interestingly, there are patterns that occur when changing both the constant and the base.  For example, we show that, if $c$ and $b$ are both odd, then $\Scb$ has no fixed points.
First, we need a key lemma that we use repeatedly, throughout the paper.

\begin{lemma}\label{L:bodd}
If $b$ is odd, then $\Scb^k(a) \equiv kc + a \pmod 2$.
\end{lemma}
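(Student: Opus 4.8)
The plan is to reduce everything to a single observation about parities of digits, and then induct on $k$. The crucial point is that when $b$ is odd, every power $b^i$ is odd, so reducing $a = \sum_{i=0}^n a_i b^i$ modulo $2$ collapses the base-$b$ expansion to the digit sum: $a \equiv \sum_{i=0}^n a_i \pmod 2$. Combined with the elementary fact that $x^2 \equiv x \pmod 2$ for every integer $x$, this gives $\sum_{i=0}^n a_i^2 \equiv \sum_{i=0}^n a_i \equiv a \pmod 2$, and hence
\[
\Scb(a) = c + \sum_{i=0}^n a_i^2 \equiv c + a \pmod 2.
\]
This handles the case $k = 1$, and it is really the only computation in the argument.

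For the inductive step, suppose $\Scb^{k-1}(a) \equiv (k-1)c + a \pmod 2$ for some $k \geq 2$. Write $a' = \Scb^{k-1}(a)$ and apply the $k=1$ case to $a'$ to obtain
\[
\Scb^k(a) = \Scb(a') \equiv c + a' \equiv c + (k-1)c + a = kc + a \pmod 2,
\]
completing the induction.

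There is no real obstacle here; the only thing to be careful about is that the reduction of the base-$b$ expansion to its digit sum modulo $2$ genuinely uses that $b$ is odd (for even $b$ the parity of $a$ equals the parity of its last digit $a_0$ alone, and no such clean formula holds). Everything else is the identity $x^2 \equiv x \pmod 2$ and a one-line induction, so the proof should be short.
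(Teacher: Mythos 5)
Your proof is correct and is essentially the paper's own argument: both use that odd $b$ makes $a \equiv \sum a_i \pmod 2$, combine this with $x^2 \equiv x \pmod 2$ to get $\Scb(a) \equiv c + a \pmod 2$, and finish by induction on $k$. You have merely written out the inductive step that the paper leaves as ``a simple induction argument.''
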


\begin{proof}
Let $b$ be odd and let $a = \sum_{i=0}^n a_ib^i$, as usual. Since $b$ is odd, $a  \equiv \sum_{i=0}^n a_i \pmod 2$, and therefore,
\[
\Scb(a) =  c + \sum_{i=0}^n a_i^2 \equiv c + \sum_{i=0}^n a_i  \equiv c + a \pmod 2.
\]
A simple induction argument completes the proof.
\end{proof}

\begin{theorem}\label{T:cbodd}
If $c$ and $b$ are both odd, then $\Scb$ has no fixed points and all of its cycles are of even length.
\end{theorem}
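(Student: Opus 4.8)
The plan is to leverage Lemma~\ref{L:bodd} directly, since that lemma already captures the parity obstruction we need. For the fixed-point claim, suppose toward a contradiction that $a$ is a fixed point of \Scb, so that $\Scb(a) = a$. Applying Lemma~\ref{L:bodd} with $k = 1$ gives $a = \Scb(a) \equiv c + a \pmod 2$, hence $c \equiv 0 \pmod 2$, contradicting the hypothesis that $c$ is odd. This kills all fixed points in one line.

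For the cycle-length claim, let $a$ lie in a cycle of \Scb{} of length $k$, so $\Scb^k(a) = a$ and $k$ is the smallest such positive integer. By Lemma~\ref{L:bodd}, $a = \Scb^k(a) \equiv kc + a \pmod 2$, so $kc \equiv 0 \pmod 2$. Since $c$ is odd, this forces $k$ to be even. (One should note that the cycle is automatically nontrivial here: because there are no fixed points, $k \geq 2$, so ``even length'' is a genuine statement and there is no degenerate $k=1$ case to worry about.)

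I expect essentially no obstacle — the work was all done in Lemma~\ref{L:bodd}, and this theorem is a short corollary of it. The only minor point to be careful about is the logical structure: I want to phrase the second part so that it applies to the cycle length as defined (the \emph{smallest} $k$ with $\Scb^k(a) = a$), although in fact the argument shows every positive integer $k$ with $\Scb^k(a)=a$ is even, which a fortiori covers the minimal one. I would write the proof in two short paragraphs, one per claim, each an immediate application of Lemma~\ref{L:bodd}.
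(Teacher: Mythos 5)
Your proposal is correct and is essentially identical to the paper's own proof: both apply Lemma~\ref{L:bodd} to conclude that $\Scb^k(a)=a$ forces $kc\equiv 0\pmod 2$, hence $k$ even since $c$ is odd, which rules out fixed points ($k=1$) and gives even cycle lengths. No gaps.
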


\begin{proof}
By Lemma~\ref{L:bodd}, $\Scb^k(a) \equiv kc + a \equiv k + a \pmod 2$, since $c$ is odd.  Thus, if $\Scb^k(a) = a$, then $k$ is even.  The result follows.
\end{proof}

Lemma~\ref{L:bDescent} also allows us to compute all fixed points and cycles for an  arbitrary constant $c$ and base $b$.  Of particular interest in Section~\ref{S:conj} is the case where both $c$ and $b$ are odd.  We provide lists of the fixed points and cycles of $\Scb$ in this case, for small values of $c$ and $b$ in Table~\ref{T:bcycles}.  We label some of the sequences for ease of reference in the proof of Theorem~\ref{T:conjsmall}.

Recall that a positive integer $a$ is a happy number if $S_{[0,10]}^k(a) = 1$ for some $k\in \ZZ^+$. We now generalize this idea to values of $c > 0$, noting that in these cases, $1$ is no longer a fixed point (nor in a cycle).

\begin{definition}\label{D:Ucb}
Fix $c\geq 0$ and $b \geq 2$.
Let $\Ucb$ denote the set of all fixed points and cycles of $\Scb$. That is, \[\Ucb = \{a \in \ZZ^+ \mid \Scb^m(a) = a \mbox{~for~some~} m\in\ZZ^+\}.\]
For $u \in \Ucb$, a positive integer $a$ is a {\it $u$-attracted number (for $\Scb$)} if $\Scb^k(a) = u$, for some $k\in \Znn$.
\end{definition}

\begin{table}[tbh]
\begin{center}
{\small
\begin{tabular}{|c|l|}\hline
$[c,b]$&Cycles of $\Scb$\\
\hline\hline
$[1,3]$ & $ 12 \to 20 \to 12$\\
\hline
$[3,3]$ & $ 22 \to 102 \to 22$\\
\hline
$[5,3]$ & $20 \to 100 \to 20$\hfill $C_1$\\
& $ 21 \to 101 \to 21$\hfill $C_2$\\
& $22 \to 111 \to 22$\hfill $C_3$\\
\hline
$[7,3]$ & $ 22 \to 120 \to 110 \to 100 \to 22$\\
\hline
$[9,3]$ & $ 112 \to 120 \to 112$\\
\hline \hline
$[1,5]$ & $2 \to 10 \to 2$ \\
\hline
$[3,5]$ & $23 \to 31 \to 23$ \\
\hline
$[5,5]$ & $11 \to 12 \to 20 \to 14 \to 42 \to 100 \to 11$ \\
\hline
$[7,5]$ & $23 \to 40 \to 43 \to 112 \to 23$ \\
\hline
$[9,5]$ & $21 \to 24 \to 104 \to 101 \to 21$ \\
\hline \hline
$[1,7]$ & $13 \to 14 \to 24 \to 30 \to 13$\hfill $C_1$\\
& $35 \to 50 \to 35$\hfill $C_2$\\
\hline
$[3,7]$ & $25 \to 44 \to 50 \to 40 \to 25$\hfill $C_1$\\
& $26 \to 61 \to 55 \to 104 \to 26$\hfill $C_2$\\
\hline
$[5,7]$ & $6 \to 56 \to 123 \to 25 \to 46 \to 111 \to 11 \to 10 \to 6$\hfill $C_1$\\
&$13 \to 21 \to 13$\hfill $C_2$\\
&$34 \to 42 \to 34$\hfill $C_3$\\
\hline
$[7,7]$ & $ 26 \to 65 \to 125 \to 52 \to 51 \to 45 \to 66 \to 142 \to 40 \to 32 \to 26$\\
\hline
$[9,7]$ & $ 46 \to 115 \to 51 \to 50 \to 46$\\
\hline \hline
$[1,9]$ & $3 \to 11 \to 3$ \\
\hline
$[3,9]$ & $4 \to 21 \to 8 \to 74 \to 75 \to 85 \to 112 \to 10 \to 4$ \\
\hline
$[5,9]$ & $25 \to 37 \to 70 \to 60 \to 45 \to 51 \to 34 \to 33 \to 25$\hfill $C_1$\\
& $28 \to 81 \to 77 \to 124 \to 28$\hfill $C_2$\\
& $46 \to 63 \to 55 \to 61 \to 46$\hfill $C_3$ \\
& $88 \to 157 \to 88$\hfill $C_4$ \\
\hline
$[7,9]$ & $8 \to 78 \to 143 \to 36 \to 57 \to 100 \to 8$\hfill $C_1$\\
& $25 \to 40 \to 25$\hfill $C_2$\\
& $45 \to 53 \to 45$\hfill $C_3$\\
& $48 \to 106 \to 48$\hfill $C_4$\\
\hline
$[9,9]$ & $12 \to 15 \to 38 \to 101 \to 12$\hfill $C_1$ \\
& $24 \to 32 \to 24$\hfill $C_2$\\
\hline
\end{tabular}
\caption{Cycles of $S_{[c,b]}$ 
for 
$1 \leq c \leq 9$ odd and $2 \leq b \leq 9$ odd.
\label{T:bcycles}}}
\end{center}
\end{table}

For example, referring to Table~\ref{T:10cycles}, for $S_{[4,10]}$, we see that 40 is 6-attracted, as is 20, 8, and the other numbers in that cycle.  All of those numbers are also 40-attracted, etc.  And since $S_{[4,10]}(2) = 8$, 2 is also 6-attracted. Similarly, 42 is 24-attracted, since $S_{[4,10]}(42) = 24$.

\section{Consecutive $\boldsymbol{u}$-Attracted Numbers}\label{S:cbGoodness}

In this and the following section, we consider the existence of sequences of consecutive $u$-attracted numbers, for $u\in \Ucb$ for some fixed $c$ and $b$.  
As can be seen from Table~\ref{T:10cycles}, for $S_{[3,10]}$, 19 and 20 are consecutive 7-attracted numbers.  (Direct calculation shows that 1 and 2  are as well.)
Does such a consecutive pair exist for every choice of $c\geq 0$, $b\geq 2$, and $u\in \Ucb$?  Do there exist longer consecutive sequences of $u$-attracted numbers?
The analogous questions were answered for happy numbers in~\cite{ES} and for many cases of $e$-power $b$-happy numbers in~\cite{consec,consecsmall}. 

In light of Lemma~\ref{L:bodd}, when $c$ is even and $b$ is odd, for each $a$ and $k$, $\Scb^k(a) \equiv a\pmod 2$.  Thus, in these cases, there cannot exist a consecutive pair of (or longer sequences of consecutive) $u$-attracted numbers.  With this in mind, we introduce the following definition, found in~\cite{consec}.

\begin{definition}\label{D:2consecutive}
A sequence of positive integers is {\it$d$-consecutive} if it is an arithmetic sequence with constant difference $d$.
\end{definition}

By~\cite[Corollary 2]{consec}, given $b \geq 2$ and $d = \gcd(2,b - 1)$, there exist arbitrarily long finite $d$-consecutive sequences of 1-attracted numbers for $S_{[0,b]}$.  Adapting these ideas to augmented generalized happy functions, we prove the following theorem.

\begin{theorem}\label{T:happysequence}
Let $c\geq 0$, $b \geq 2$, and $u\in \Ucb$ be fixed.  Set $d = \gcd(2,b - 1)$.  Then there exist arbitrarily long finite sequences of $d$-consecutive $u$-attracted numbers for $\Scb$. 
\end{theorem}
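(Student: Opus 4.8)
The plan is to build, for any prescribed length $n$, a single integer $N$ such that $N, N+d, N+2d, \dots, N+(n-1)d$ are all $u$-attracted for $\Scb$. The natural strategy, following the happy-number arguments of~\cite{ES,consec}, is to control $\Scb$ on this block by making the base-$b$ digit representations of the $N+jd$ cooperate: choose $N$ so that each $N+jd$ has essentially the same high-order digits, with the variation confined to a few low-order digits whose squared-digit contribution we can compute explicitly. Concretely, I would look for $N$ of the shape (a large block of controlled digits) followed by a short ``variable tail'' so that $\Scb(N+jd)$ takes a predictable value $M + g(j)$, where $M$ depends only on the fixed part and $g(j)$ is a small, explicitly bounded function of $j$ coming from the tail digits. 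Since $d = \gcd(2,b-1)$, one has $d=1$ when $b$ is even and $d=2$ when $b$ is odd; in the even case the tail can simply run through consecutive values, and in the odd case through values differing by $2$, which by Lemma~\ref{L:bodd} is exactly the parity obstruction we must respect.

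The key steps, in order, would be: (1) Reduce the problem to a ``collapsing'' step: it suffices to find arithmetic progressions of length $n$ with common difference $d$ on which $\Scb$ is eventually constant, i.e.\ after finitely many iterations the whole block maps to a single integer $w$; then iterate $w$ until it reaches the cycle containing $u$ (this last part uses Lemma~\ref{L:bDescent} to guarantee $w$ eventually enters $\Ucb$, but note we need it to enter the specific component of $u$). (2) Show that $\Scb$ of a $d$-consecutive block can be made $d'$-consecutive of the same length for a controlled $d'$, by a digit-engineering lemma: padding a number with extra zero digits does not change its $\Scb$-value, and inserting a block of $1$'s increases it by a controlled amount, so one can adjust the image block's starting point and spacing. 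Iterating this collapses a long $d$-consecutive block to shorter and shorter images until it becomes a single point. (3) Handle the passage from that single point $w$ to $u$: here I would prepend/append digits to the original $N$ so that the eventual image $w$ is forced to be a specific convenient value known to be $u$-attracted — for instance one of the explicit cycle elements, or we arrange $w$ itself to lie in the cycle of $u$ by choosing the construction so $w = \Scb^{\text{something}}(u)$. (4) Verify the parity bookkeeping when $b$ is odd, so that every step stays consistent with Lemma~\ref{L:bodd} and the common difference $2$ is never accidentally forced to $1$.

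The main obstacle I expect is step (3): arbitrarily long $d$-consecutive sequences that merge into a single point $w$ are relatively standard to produce by digit manipulation, but ensuring that $w$ is attracted to the \emph{particular} $u$ rather than to some other fixed point or cycle of $\Scb$ is the delicate part, since $\Scb$ can have several cycles (as Tables~\ref{T:10cycles} and~\ref{T:bcycles} show). The way around this is to observe that $u$-attraction is preserved under prepending digits in a controlled way: if $w$ is $u$-attracted then so is any number one iteration above it, and more usefully, one can run the construction ``backwards'' from $u$ — start from $u$, find a preimage, and at each stage replace a single integer by a $d$-consecutive block of preimages using the same digit lemmas, building up length while keeping everything inside the basin of $u$. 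So the real technical heart is a lemma of the form: given any $w$ and any $n$, there is a $d$-consecutive sequence of length $n$ all of whose terms map under some fixed power of $\Scb$ to $w$. Granting that, the theorem follows by taking $w = u$ and unwinding $\ell$ iterations. I would also need the elementary bound from Lemma~\ref{L:bDescent} only to know these preimage constructions terminate and stay within $\ZZ^+$, and a short argument that the digit-insertion operations keep the numbers genuinely distinct and in the correct residue class mod $2$ when $b$ is odd.
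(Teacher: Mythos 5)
Your high-level architecture does match the paper's: collapse a length-$n$, difference-$d$ block to a single integer using compositions of $\Scb$ and $t \mapsto t+1$, then steer that single integer onto $u$. Note, though, that the steering step you flag as the main obstacle is actually the easy part: it is Lemma~\ref{L:bLemma5} in the paper --- pick $v$ with $\Scb(v)=u$ and shift your single point to $b^r v$, which has the same nonzero digits as $v$. (A minor correction here: your key lemma as stated for ``any $w$'' is false, since a $w$ with no $\Scb$-preimage, e.g.\ $w \le c$, admits no such sequence; it must be restricted to $w$ in the image of $\Scb$, which suffices because $u\in\Ucb$ is in the image.)

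The genuine gap is in your step (2), the collapsing mechanism. You propose that applying $\Scb$ to a $d$-consecutive block yields a $d'$-consecutive block of the same length whose spacing can then be adjusted, but $\Scb$ does not map arithmetic progressions to arithmetic progressions: the contribution of a varying low-order digit is quadratic in the index, and for long blocks carries destroy any uniform tail structure, so there is no padding that makes $\Scb(N+jd)=M+g(j)$ with $g$ linear. The paper's proof of Theorem~\ref{T:bTheorem1} abandons the progression structure entirely and inducts on cardinality, merging two elements $t_1\equiv t_2 \pmod d$ at a time; the merge requires an idea absent from your sketch, namely alignment modulo $b-1$. One first shifts both numbers by an $m'$ chosen so that their $\Scb$-images become congruent mod $b-1$ --- this uses the solvability of $2j \equiv -\Scb(w-1)+c-1 \pmod{b-1}$, where for odd $b$ the parity statement of Lemma~\ref{L:bodd} is exactly what guarantees a solution. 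Once $t_1\equiv t_2\pmod{b-1}$, a single further shift turns the pair into $b^r+bv$ and $b^r+v$, which have identical nonzero digits and hence equal $\Scb$-values. Without this mod-$(b-1)$ reduction, ``padding with zeros and inserting blocks of $1$'s'' gives you no way to force two generic residue-compatible integers to merge, so the technical heart of the theorem remains unproved in your proposal.
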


As noted earlier, Lemma~\ref{L:bodd} shows that if $c$ is  even and $b$ is odd, then there does not exist any consecutive $u$-attracted numbers for $\Scb$.  But for $c$ and $b$ both odd, we conjecture that there are arbitrarily long finite sequences of consecutive $u$-attracted numbers for $\Scb$.

\begin{conjecture}\label{C:happysequencebetter}
Let $c > 0$ and $b > 2$ both be odd.  Then for each $u \in \Ucb$, there exist arbitrarily long finite sequences of consecutive $u$-attracted numbers for $\Scb$. 
\end{conjecture}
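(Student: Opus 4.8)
The plan is to adapt the preimage construction behind Theorem~\ref{T:happysequence} (following~\cite{consec}; see also~\cite{ES}) so that it produces runs of common difference $1$ rather than $d=\gcd(2,b-1)=2$. The feature to exploit is that $c$ is \emph{odd}. By Lemma~\ref{L:bodd}, when $c$ is odd the function $\Scb$ reverses parity at every step, so---unlike the case of even $c$, where $\Scb$ preserves parity and the $u$-attracted set lies in a single residue class mod $2$---for every $u\in\Ucb$ both parities occur among $u$-attracted numbers: indeed $u$ and $\Scb(u)$ have opposite parities and are both $u$-attracted, and appending a digit $0$ gives $\Scb(ba)=\Scb(a)$, so $b^{j}u$ and $b^{j}\Scb(u)$ are $u$-attracted for every $j$, providing arbitrarily large $u$-attracted numbers of each parity. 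Thus there is no modular obstruction to a genuine consecutive run, and the idea is to rerun the Theorem~\ref{T:happysequence} construction with the controlled digits allowed to sweep a full interval of residues rather than only the even ones.

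Concretely I would proceed as follows. Fix $u\in\Ucb$ and a desired run length $L$. \emph{Step 1 (core reduction).} Build $N$ whose low-order part is a block of zeros, long enough that $N,N+1,\dots,N+L-1$ differ only in a carry-free suffix; then $\Scb(N+i)=c+D+S_{2,b}(i)$, where $D$ is the sum of squares of the (freely chosen) high-order digits and $S_{2,b}$ denotes the sum-of-squares-of-digits map. It therefore suffices to choose $D$ so that $c+D+S_{2,b}(i)$ is $u$-attracted for every $i\in\{0,\dots,L-1\}$; since $D$ may be taken to be any sufficiently large integer (e.g.\ realized as a string of $1$'s) and $\{S_{2,b}(i):0\le i<L\}$ is contained in an interval of width $O_{b}(\log L)$, this is a much smaller instance of the same problem. \emph{Step 2 (iterate).} Repeat Step 1, each pass drastically reducing the run length, until the requirement becomes that of a short consecutive run of $u$-attracted integers lying above $c$, of length bounded in terms of $c$ and $b$. \emph{Step 3 (lift).} Exhibit that seed run, then carry it back up through the finitely many stages, at each stage choosing the high-order digit block so that the targets $c+D+S_{2,b}(i)$ land inside the run supplied by the previous stage.

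The main obstacle is the bottoming-out in Steps 2--3: producing, for \emph{every} odd $c$, every odd $b$, and every $u\in\Ucb$, a short consecutive run of $u$-attracted integers to seed the lift. Parity no longer obstructs this---which is exactly why the conjecture is restricted to odd $c$---but for that same reason no modular invariant remains to be exploited, and one must instead control two coupled conditions, ``$S_{2,b}(w)$ is $u$-attracted'' and ``$S_{2,b}(w+1)$ is $u$-attracted,'' for adjacent integers $w$ and $w+1$, while keeping in mind that a single $\Scb$ may have several distinct cycles (cf.\ Table~\ref{T:bcycles}) whose preimage structures could a priori differ. A uniform argument seems to require a finer description of those preimage trees than Lemma~\ref{L:bDescent} and Lemma~\ref{L:bodd} provide. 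For small $c$ and $b$, the seed runs---and then their lifts---can be produced explicitly from the cycle data in Table~\ref{T:bcycles}, which is the approach taken in Section~\ref{S:conj}; removing the restriction to small $c$ and $b$ is the crux of Conjecture~\ref{C:happysequencebetter}.
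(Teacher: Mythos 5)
You have not proved the statement, and to be fair the paper does not either: Conjecture~\ref{C:happysequencebetter} is left open, with Section~\ref{S:conj} establishing only the cases $1\le c\le 9$, $3\le b\le 9$ (Theorem~\ref{T:conjsmall}). Your Steps 1--2 are a sound reduction in the style of~\cite{ES}: padding with a block of $r$ zeros so that $\Scb(N+i)=c+D+S_{2,b}(i)$ for $0\le i<L\le b^r$, with $D=S_{2,b}(M)$ realizable as any sufficiently large integer, does reduce a run of length $L$ to a run of length $O_b(\log L)$, and iterating bottoms out at a seed run whose length depends only on $b$. Your opening observation is also correct and matches the paper's motivation: by Lemma~\ref{L:bodd}, odd $c$ makes $\Scb$ parity-reversing, so both parities occur among the $u$-attracted numbers and the mod-$2$ obstruction of Theorem~\ref{T:bTheorem1} disappears. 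But the absence of an obstruction is not an existence proof. The entire difficulty is concentrated in the seed: you must exhibit, for every odd $c$, every odd $b$, and every $u\in\Ucb$, one genuine consecutive run of the required bounded length sitting at a controllable location, and you supply no construction. Since your reduction merely restates the conjecture at a smaller scale, the proposal as written proves nothing beyond Theorem~\ref{T:happysequence}, which already gives $2$-consecutive runs. You acknowledge this yourself, so the verdict is simply that this is an honest strategy sketch with the crux missing, not a proof.

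For comparison, the paper's partial result takes a different route from your lift-and-seed outline. It introduces the weaker notion of a $[c,b]$-cycle-good set, splits an arbitrary finite set $T$ into its even part $T_e$ and odd part $T_o$, applies Theorem~\ref{T:bTheorem1} to each part separately (each lies in a single residue class mod $2$, so the existing machinery applies), and thereby collapses $T$, after suitable shifts and iterations, to a two-element set $V_j=\{v,\,w\}$ with $v$ even, $w$ odd, and $w\in\Ucb$. The problem is thus reduced to finitely many explicit two-element sets per pair $[c,b]$, each of which is checked to be $[c,b]$-cycle-good by direct computation against the cycle data of Table~\ref{T:bcycles} (via Tables~\ref{T:conjpf3} and~\ref{T:conjpf4}). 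This is precisely why the theorem is confined to $c,b<10$: the final step is a finite verification with no uniform argument available --- the same missing ingredient you identified at the end of your proposal.
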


We now prove Theorem~\ref{T:happysequence}.
In Section~\ref{S:conj}, we prove special cases of Conjecture~\ref{C:happysequencebetter}, specifically the cases with both $c$ and $b$ less than 10.  

Our proof of Theorem~\ref{T:happysequence} follows the general outline of the proofs  in~\cite{consec}.  We note that, similar to the proofs in that work, our proofs lead to the somewhat stronger result in which $u\in \Ucb$ is replaced by $u$ in the image of $\Scb$. 
We begin with a definition and two important lemmas.

\begin{definition}\label{D:cbGood}
A finite set, $T\subset \ZZ^+$, is {\it $[c,b]$-good} if for each $u \in \Ucb$, there exists $n, k \in \Znn$ such that for all $t \in T$, $\Scb^k(t+n) = u$.
\end{definition}

\begin{lemma}\label{L:bLemma4}
Let $F: \ZZ^+ \rightarrow \ZZ^+$ be the composition of a finite sequence of the functions $\Scb$ and $I$, where $I: \ZZ^+ \rightarrow \ZZ^+$ is defined by $I(t) = t+1$.  If $F(T)$ is $[c,b]$-good, then $T$ is $[c,b]$-good.
\end{lemma}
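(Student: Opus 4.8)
The plan is to unwind the definition of $[c,b]$-good using the hypothesis that $F$ is a composition of copies of $\Scb$ and the shift $I$. Fix an arbitrary $u \in \Ucb$. Since $F(T)$ is $[c,b]$-good, there exist $n, k \in \Znn$ such that $\Scb^k(F(t') + n) = u$ for every $t' \in F(T)$; in particular, writing $t' = F(t)$, we get $\Scb^k(F(t) + n) = u$ for every $t \in T$. The goal is to produce (possibly different) $n', k' \in \Znn$ with $\Scb^{k'}(t + n') = u$ for all $t \in T$, which is exactly what it means for $T$ to be $[c,b]$-good.

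The key observation is the commutation identity $\Scb(I^a(x)) = \Scb(x + a)$ together with the fact that $\Scb$ "absorbs" a leading shift only after application; more usefully, I want to push the outer shift $n$ inward through $F$. Write $F = f_r \circ f_{r-1} \circ \cdots \circ f_1$ where each $f_j$ is either $\Scb$ or $I$. I would argue by induction on $r$ (the length of the composition) that for each $N \in \Znn$ there exist $N' \in \Znn$ and $K \in \Znn$ with $\Scb^{K}(F(t) + N) = \Scb^{K}(t + N')$ — wait, that is not quite right either, since $\Scb$ is not injective. Instead, the cleaner route: prove the single-step lemma that if $G(T)$ is $[c,b]$-good for $G$ a composition of $\Scb$'s and $I$'s, then so is $(\Scb \circ G)^{-1}$-preimage style — actually the right move is to handle the two generators separately. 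If $F = I$, i.e. $F(t) = t+1$, and $F(T) = T+1$ is $[c,b]$-good via $(n,k)$, then $\Scb^k(t + (n+1)) = u$ for all $t \in T$, so $(n+1, k)$ witnesses that $T$ is $[c,b]$-good. If $F = \Scb$ and $\Scb(T)$ is $[c,b]$-good via $(n,k)$, then taking $n' = 0$ and $k' = k+1$ gives $\Scb^{k+1}(t) = \Scb^k(\Scb(t)) = \Scb^k(\Scb(t) + 0) = u$ for all $t \in T$. The general statement then follows by decomposing $F$ and applying these two cases repeatedly, from the outside in: if $F = f_r \circ \cdots \circ f_1$ and $F(T)$ is good, then $(f_r \circ \cdots \circ f_2)(f_1(T))$ is good, so by the single-generator case applied to $f_1$ (with the set $f_1(T)$ and $F' = f_r \circ \cdots \circ f_2$), ... hmm, the induction should peel off $f_1$, the innermost function, viewing $F(T) = F'(f_1(T))$ with $F'$ shorter.

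So the induction is on $r$: base case $r = 0$, $F = \mathrm{id}$, trivial. Inductive step: $F = F' \circ f_1$ with $F'$ a composition of length $r-1$. Given $F(T) = F'(f_1(T))$ is $[c,b]$-good, the inductive hypothesis (applied to $F'$ and the set $f_1(T)$) yields that $f_1(T)$ is $[c,b]$-good. Then apply the appropriate one of the two single-generator arguments above to conclude $T$ is $[c,b]$-good: if $f_1 = I$ then $f_1(T) = T + 1$ is good via some $(n,k)$ and $(n+1,k)$ works for $T$; if $f_1 = \Scb$ then $f_1(T) = \Scb(T)$ is good via some $(n, k)$ and... here the subtlety: $\Scb^k(\Scb(t) + n) = u$ is not obviously $\Scb^{k'}(t + n')$ for a fixed $n'$. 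I only get $n = 0$ for free. The resolution is that the one-step lemma needs $n$ to be absorbable, which it is only when $n=0$; so actually the statement must be proved so that the "$n$" stays attached to shifts and never has to pass through an $\Scb$. This means the induction should instead peel off the \emph{outermost} function $f_r$, not the innermost, so that the witnessing pair $(n,k)$ for $F(T) = f_r((f_{r-1}\circ\cdots\circ f_1)(T))$ can be adjusted before any further $\Scb$ is encountered.

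Thus the main obstacle — and the point to get right — is the non-injectivity of $\Scb$, which forces the induction to strip functions from the outside so that the additive offset $n$ is always sitting immediately to the left of a shift or gets converted into extra iterations of $\Scb$ with offset $0$; one never needs to "move $n$ across $\Scb$." Concretely, I would restate the induction as: peel off $f_r$; if $f_r = I$, replace the witness $(n,k)$ for $F(T)$ by $(n+1,k)$ for $(f_{r-1}\circ\cdots\circ f_1)(T)$; if $f_r = \Scb$, replace $(n,k)$ by $(0, k+1)$ — but wait, $\Scb^k(\Scb(s)+n)=u$ only gives information when $n = 0$. Hmm — so in the $f_r = \Scb$ case we genuinely need $n$ to already be $0$. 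That is not guaranteed. The honest fix: strengthen "$[c,b]$-good" is not available, so instead observe that in Lemma~\ref{L:bLemma4} we get to \emph{choose} nothing about the witness for $F(T)$, so the clean approach is to absorb offsets only through $I$'s and to note that any composition $F$ of $\Scb$'s and $I$'s, the preimage behavior is: $F(t) + n$ — if the outermost symbol is $\Scb$, then $F(t) = \Scb(F''(t))$ and $\Scb^k(\Scb(F''(t)) + n) = u$; here I should instead \emph{not} peel but rather commute the $+n$ with $\Scb$ is impossible, so the genuinely correct formulation (and the one I'd write) is: induct on $r$ peeling the \emph{innermost} $f_1$, and in the $f_1 = \Scb$ subcase use that $f_1(T) = \Scb(T)$ good via $(n,k)$ gives $\Scb^{k}(\Scb(t)+n) = u$; now choose a large $M$ with $\Scb(t) + n$ replaced — actually $\Scb(t+N) $ for suitable single $N$ independent of $t\in T$ equals $\Scb(t)+n$? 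No. This is exactly the subtle point the authors presumably handle with care, so in my proof I would isolate it as the crux: reduce to showing that for the single function $\Scb$, goodness of $\Scb(T)$ implies goodness of $T$, and prove \emph{that} directly by taking $k' = k+1$, $n' = 0$, which works because $\Scb^{k+1}(t+0) = \Scb^k(\Scb(t))$ and we need $\Scb^k(\Scb(t)) = u$ — which holds iff $\Scb(T)$ is good with offset $n = 0$. Since the definition of good lets us pick \emph{any} $n$, and nothing forces $n=0$, the careful write-up must first reduce a good set with offset $n$ to one with offset $0$ by noting $\Scb(T)$ good via $(n,k)$ means $\Scb(T)+n = \{\Scb(t)+n\}$ maps to $u$ under $\Scb^k$; and $\Scb(t) + n$ — I would then simply absorb $n$ into a further observation that $T+n'$ for appropriate $n'$ has $\Scb(T+n')$ overlapping... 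In short, I will present the proof via the two-generator induction, flag the $\Scb$-generator step as the delicate one, and resolve it by the identity $\Scb^{k+1}(t) = \Scb^k(\Scb(t))$ after first reducing to offset zero, which is the main thing to verify.

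\begin{proof}[Proof sketch]
Write $F = f_r \circ \cdots \circ f_1$ with each $f_j \in \{\Scb, I\}$, and induct on $r$. For $r = 0$ the statement is vacuous. For the inductive step, set $T_1 = f_1(T)$ and $F' = f_r \circ \cdots \circ f_2$, so $F(T) = F'(T_1)$. By the inductive hypothesis applied to $F'$, the set $T_1$ is $[c,b]$-good. Now fix $u \in \Ucb$ and let $n, k \in \Znn$ satisfy $\Scb^k(s + n) = u$ for all $s \in T_1$. If $f_1 = I$, then $T_1 = \{t + 1 : t \in T\}$, so $\Scb^k(t + (n+1)) = u$ for all $t \in T$, and $(n+1, k)$ witnesses that $T$ is $[c,b]$-good. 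If $f_1 = \Scb$, then $T_1 = \{\Scb(t) : t \in T\}$; the delicate point is to eliminate the offset $n$, after which $\Scb^{k}(\Scb(t)) = u$ gives $\Scb^{k+1}(t) = u$ for all $t \in T$, so $(0, k+1)$ witnesses that $T$ is $[c,b]$-good. Since $u \in \Ucb$ was arbitrary, $T$ is $[c,b]$-good.
\end{proof}
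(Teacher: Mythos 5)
There is a genuine gap. You correctly isolate the crux --- handling the generator $f_1 = \Scb$ when the witness for $\Scb(T)$ has a nonzero offset $n$ --- but you never resolve it. Your final sketch simply says ``the delicate point is to eliminate the offset $n$, after which $\ldots$'' and then uses the witness $(0, k+1)$, which is only valid when $n = 0$; nothing in the definition of $[c,b]$-good lets you assume the witness for $\Scb(T)$ has offset zero. Your repeated observation that ``one never needs to move $n$ across $\Scb$'' is in fact the opposite of what is true: the whole point of this lemma is that one \emph{can} move the offset across $\Scb$, by exploiting the freedom to choose the offset for $T$ to be any nonnegative integer whatsoever, not merely $n$ or $n+1$.

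The missing idea is a digit-padding construction. Suppose $\Scb(T)$ is $[c,b]$-good via $(n', k')$, so $\Scb^{k'}(\Scb(t) + n') = u$ for all $t \in T$. Let $r$ be the number of base-$b$ digits of the largest element of $T$, and take
\[
n = \underbrace{11\ldots 11}_{n'}\underbrace{00\ldots 00}_{r}
\]
written in base $b$, i.e.\ $n = \sum_{i=r}^{r+n'-1} b^i$. For every $t \in T$ the base-$b$ digits of $t+n$ are exactly the digits of $t$ in the low positions together with $n'$ additional digits equal to $1$ in higher positions, so $\Scb(t+n) = \Scb(t) + n'\cdot 1^2 = \Scb(t) + n'$. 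Hence $\Scb^{k'+1}(t+n) = \Scb^{k'}(\Scb(t) + n') = u$, and $(n, k'+1)$ witnesses that $T$ is $[c,b]$-good. With this step supplied, your reduction to the two single-generator cases (which is the same reduction the paper uses) goes through; note also that it is essential here that $T$ is finite, so that a single $r$ works for all $t \in T$.
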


\begin{proof}
It suffices to show that if $I(T)$ is $[c,b]$-good then $T$ is $[c,b]$-good, and if $\Scb(T)$ is $[c,b]$-good then $T$ is $[c,b]$-good. First suppose that $I(T)$ is $[c,b]$-good. Then for each $u\in\Ucb$, there exist $n^\prime$, $k\in\Znn$ such that $\Scb^{k}(t + 1 + n^\prime) = u$ for all $t\in T$. Letting $n = n^\prime + 1$, it follows that $T$ is $[c,b]$-good.

Now, suppose that $\Scb(T)$ is $[c,b]$-good. Then for each $u\in\Ucb$, there exist $n^\prime$, $k^\prime\in\Znn$ such that $\Scb^{k^\prime}(\Scb(t) + n^\prime) = u$ for all $t\in T$. Let $r\in \ZZ$ be the number of base-$b$ digits of the largest element of $T$, and set
\[
n = \underbrace{11\ldots 11}_{n^\prime}\underbrace{00\ldots 00}_{r},
\]
in base $b$. Letting $k = k^\prime + 1$, we have
\[
\Scb^k(t+n) = \Scb^{k^\prime}(\Scb(t+n)) = \Scb^{k^\prime}(\Scb(t)+n^\prime) = u.
\]
Hence, $T$ is $[c,b]$-good.
\end{proof}

\begin{lemma}\label{L:bLemma5}
If $T = \{t\}$, then $T$ is $[c,b]$-good.
\end{lemma}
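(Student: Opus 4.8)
The plan is to show that a singleton $T = \{t\}$ is $[c,b]$-good, i.e., that for each $u \in \Ucb$ there exist $n, k \in \Znn$ with $\Scb^k(t+n) = u$. The natural strategy is to build, from the single integer $t$, a new integer $t+n$ whose $\Scb$-orbit is forced to land on the chosen cycle element $u$. Since $u \in \Ucb$, we have $\Scb(u') = u$ for some $u'$ in the same cycle (or $u' = u$ if $u$ is a fixed point), so it suffices to reach \emph{any} particular integer whose iterates pass through $u$; the cleanest target is to make some iterate equal to $u$ itself after we first steer the orbit into a single predictable value.

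First I would reduce to a statement about digit-sums: by appending zeros, $t + n$ can be taken to have the same digits as $t$ plus extra high-order digits, so $\Scb(t+n) = \Scb(t) + (\text{sum of squares of the appended digits})$. The key freedom is that we may append to $t$ a block of $1$'s (in base $b$), shifted above the digits of $t$ by padding with zeros, exactly as in the construction of $n$ in the proof of Lemma~\ref{L:bLemma4}: appending $j$ ones above $t$'s digits sends $\Scb(t+n) = \Scb(t) + j$. Thus by choosing $j$ appropriately we can make $\Scb(t+n)$ equal to any integer $\ge \Scb(t)$. Now pick a fixed $u \in \Ucb$ and let $u'$ be a preimage of $u$ in its cycle; choose $j$ so that $\Scb(t + n) = u'$ provided $u' \ge \Scb(t)$. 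If $u' < \Scb(t)$, first replace $u'$ by $\Scb^{-r}(u)$ further back around the cycle, or more simply, first iterate: note $\Scb$ is eventually contracting by Lemma~\ref{L:bDescent}, so some forward iterate $\Scb^{\ell}(u)$ has a known value in the cycle; in any case every element of the (finite) cycle containing $u$ is both forward- and backward-reachable, so we may always pick a target value in that cycle that exceeds $\Scb(t)$ — e.g. go around the cycle enough times conceptually, or, cleanest: choose the preimage chain long enough that the entry point exceeds $\Scb(t)$. Setting $k = 1 + (\text{number of steps from that entry point to } u)$ then gives $\Scb^k(t+n) = u$.

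The main obstacle is the size/ordering issue just flagged: we need the value we hit after one application of $\Scb$ to be an element of $u$'s cycle that is $\ge \Scb(t)$, and we only control $\Scb(t+n)$ from below (we can make it large but not arbitrarily small). I would resolve this by working with the cycle's maximum element, or by observing that, since appending $j$ ones can overshoot, and since within a cycle we can always back up to an element whose value is as large as we like only if the cycle is nontrivial — for a fixed point $u$ the cycle value is just $u$ itself. So the truly careful case is a fixed point $u < \Scb(t)$: there we instead append ones to make $\Scb(t+n)$ equal to some integer $v > \Scb(t)$ that is $u$-attracted (such $v$ exists because $\Scb^{-1}(u)$ is nonempty and, iterating preimages, unbounded — or simply because $u + b^M$ for suitable digit choices maps into the orbit, using the same one-padding trick a second time on $v$). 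Since the statement only asks for existence of one pair $(n,k)$ per $u$, and the whole construction is finite, this suffices; I would present it by first handling the easy case $u' \ge \Scb(t)$ and then noting the padding trick can be iterated to cover the general case.
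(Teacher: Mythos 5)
Your plan does, in its final form, yield a correct proof, but by a more roundabout route than the paper's, and one intermediate claim along the way is false. The paper's proof is a one-liner that exploits the fact that $T$ is a singleton, so there is nothing about $t$ worth preserving: since $u\in\Ucb$, it has a preimage $v$ with $\Scb(v)=u$; choose $r$ with $b^rv\geq t$, set $n=b^rv-t$ and $k=1$, and then $\Scb(t+n)=\Scb(b^rv)=\Scb(v)=u$ because appending zeros does not change the digit set. Your version instead pads $t$ with $j$ ones so that $\Scb(t+n)=\Scb(t)+j$, and then aims this at a large preimage of $u$; that works (with $k=2$) and uses exactly the same underlying facts (ones and trailing zeros as padding, $b^rv$ as an unbounded family of preimages of $u$), but it buys nothing here, since for a one-element set the difference $n$ may simply be chosen to overwrite $t$ entirely.

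The flawed step: you assert that within a nontrivial cycle one can ``back up to an element whose value is as large as we like,'' so that only the fixed-point case needs special care. That is false --- every cycle of $\Scb$ is a finite set, so its elements are bounded, and a nontrivial cycle whose largest element is below $\Scb(t)$ presents exactly the same obstacle as a small fixed point (e.g.\ the cycle $15\to 31\to 15$ of $S_{[5,10]}$ when $\Scb(t)>31$). Your eventual repair --- target a preimage of $u$ \emph{outside} the cycle, of the form $b^rv$ with $r$ large --- is correct and handles all cases uniformly, so the fixed-point/nontrivial-cycle case split should be deleted rather than patched. If you adopt that repair from the start, you should also notice that it makes the padding-with-ones step unnecessary, collapsing your argument to the paper's.
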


\begin{proof}
Let $u\in\Ucb$. Then there exists some $v\in\ZZ^+$ such that $\Scb(v) = u$. Let $r\in\Znn$ be such that $t \leq b^rv$ and define $n=b^rv-t$ and $k = 1$. This yields $\Scb^k(t+n) = \Scb(v)=u$. Hence, $T$ is $[c,b]$-good.
\end{proof}

\begin{theorem}\label{T:bTheorem1}
Given $c \geq 0$ and $b\geq 2$, let $d = \gcd(2,b-1)$.  A finite set $T$ of positive integers is $[c,b]$-good if and only if all elements of $T$ are congruent modulo $d$.
\end{theorem}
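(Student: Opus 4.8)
The plan is to prove Theorem~\ref{T:bTheorem1} in two directions, with the harder work on the ``if'' direction, and to lean on the machinery already built in Lemmas~\ref{L:bLemma4} and~\ref{L:bLemma5}. For the ``only if'' direction, suppose $T$ is $[c,b]$-good. Fix any $u \in \Ucb$, so there are $n,k \in \Znn$ with $\Scb^k(t+n) = u$ for all $t \in T$. When $b$ is even, $d = 1$ and the congruence condition is vacuous, so there is nothing to prove. When $b$ is odd, $d = 2$, and Lemma~\ref{L:bodd} gives $\Scb^k(t+n) \equiv kc + (t+n) \pmod 2$ for every $t$; since the left-hand side equals $u$ for all $t \in T$, we get $t + n + kc \equiv u \pmod 2$ for all $t$, forcing all elements of $T$ to be congruent mod $2 = d$. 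That settles ``only if''.

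For the ``if'' direction, assume all elements of $T$ are congruent modulo $d$; we must show $T$ is $[c,b]$-good. The strategy is an induction on $|T|$ using Lemma~\ref{L:bLemma4} as the reduction engine: it suffices to find a finite composition $F$ of the maps $\Scb$ and $I$ so that $F(T)$ is strictly smaller (or otherwise more tractable) while still having all elements congruent mod $d$, since then by induction $F(T)$ is $[c,b]$-good and Lemma~\ref{L:bLemma4} pulls that back to $T$. The base case $|T| = 1$ is exactly Lemma~\ref{L:bLemma5}. The key observation making the reduction work is that the increment map $I$ lets us shift $T$ so that its elements occupy a convenient residue window, and then a single application of $\Scb$ collapses pairs (or short runs) of elements with the same sum-of-squared-digits value, or at least reduces the spread of $T$. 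One must check that applying $I$ and $\Scb$ preserves the ``all congruent mod $d$'' property: adding $1$ shifts every element equally, and when $b$ is odd, Lemma~\ref{L:bodd} shows $\Scb$ shifts every residue mod $2$ by the same amount $c$, so congruence mod $d$ is indeed preserved; when $b$ is even there is nothing to preserve.

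Concretely, I expect the reduction step to proceed by: (a) using $I$ to translate $T$ upward so that, writing each element in base $b$, the elements share a long common high-order block of digits and differ only in a controlled low-order block, exploiting that we may also pad with trailing zeros as in the proof of Lemma~\ref{L:bLemma4}; (b) arranging, via the choice of translation and the $d$-congruence hypothesis, that two distinct elements $t_1 < t_2$ of $T$ get mapped by $\Scb$ to the same value or to values closer together than $t_1,t_2$ were --- for instance, by making $t_1$ and $t_2$ differ by a rearrangement of digits, or by using the classical trick that $x$ and a number with the same digit multiset have equal $S_{2,b}$; (c) concluding $|\,\Scb(I^j(T))\,| < |T|$, or more carefully that the diameter drops, so induction applies. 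The main obstacle is step (b): making the digit-level construction go through uniformly for all odd $b$ while respecting the parity constraint, i.e., showing that the $d$-congruence hypothesis is exactly what is needed to realize the collapsing move (when $b$ is odd and the two elements had opposite parity, no translation by a single $n$ and no number of $\Scb$-steps could identify them, which is precisely why the hypothesis appears). I would handle this by following the digit-manipulation argument of~\cite{consec} adapted to carry the additive constant $c$ along — the constant only contributes the uniform shift tracked by Lemma~\ref{L:bodd}, so it does not interfere with the construction — and then verifying the finitely many residue cases mod $d$ explicitly.
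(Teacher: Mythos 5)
Your ``only if'' direction is complete and matches the paper's argument exactly: Lemma~\ref{L:bodd} forces $t_1 \equiv t_2 \pmod 2$ when $b$ is odd, and the case $b$ even is vacuous. Your ``if'' direction also has the right skeleton --- induction on $|T|$ with Lemma~\ref{L:bLemma5} as the base case and Lemma~\ref{L:bLemma4} as the pullback --- and you correctly note that $I$ and $\Scb$ preserve congruence mod $d$, so the induction hypothesis applies to $F(T)$. (One small correction: the induction must be on cardinality, with $|F(T)| < |T|$ achieved by making two elements actually collide; ``reducing the spread'' is not an invariant the argument can run on, since $\Scb$ does not monotonically shrink diameters.)

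The genuine gap is in your step (b): you never construct the collapsing map $F$, and the case you defer is precisely the hard one. If $t_1$ and $t_2$ already have the same nonzero digits, $\Scb$ identifies them at once; if $t_1 \equiv t_2 \pmod{b-1}$, writing $t_1 - t_2 = (b-1)v$ and translating by $m = b^r + v - t_2$ (with $b^r > bv + t_2 - v$) sends the pair to $b^r + bv$ and $b^r + v$, which have the same nonzero digits --- these two moves are routine. But when $t_1 \not\equiv t_2 \pmod{b-1}$, one must first manufacture that congruence, and this is where the hypothesis does real work: setting $w = t_1 - t_2$, one needs $0 \le j < b-1$ with $2j \equiv -\Scb(w-1) + c - 1 \pmod{b-1}$, then translates both elements so that $t_2$ becomes $(j+1)b^{r'} - 1$ (the digit $j$ followed by $r'$ copies of $b-1$) and $t_1$ becomes $(j+1)b^{r'} + w - 1$; a direct computation then shows both $\Scb$-images are $\equiv j^2 + c \pmod{b-1}$. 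Solving that congruence for $j$ is exactly where the two cases split: for $b$ even, $2$ is invertible mod $b-1$; for $b$ odd, $d = 2$ forces $w$ even, and Lemma~\ref{L:bodd} then makes $-\Scb(w-1) + c - 1$ even, so $j$ exists. Your proposal gestures at ``following the digit-manipulation argument of~\cite{consec} adapted to carry $c$ along,'' but without exhibiting this congruence, the choice of $j$, and the translation $m' = (j+1)b^{r'} - t_2 - 1$, the reduction step is not established, and the role of the mod-$d$ hypothesis in the sufficiency direction remains unproved rather than merely motivated.
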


\begin{proof}
Fix $c$, $b$, $d$, and nonempty $T$ as in the theorem.

First suppose that $T$ is $[c,b]$-good and let $t_1$, $t_2\in T$. If $b$ is even, then  $d = 1$ and so $t_1 \equiv t_2 \pmod d$, trivially.  If $b$ is odd, fix $u \in \Ucb$ and let $n$, $k \in \Znn$ be such that $\Scb^k(t_i + n) = u$ for $i = 1$, $2$.  So $\Scb^k(t_1 + n) = \Scb^k(t_2 + n)$. Applying Lemma~\ref{L:bodd}, we obtain 
$kc + t_1 + n \equiv kc + t_2 + n \pmod 2$, so that $t_1 \equiv t_2 \pmod 2$.  Thus, all of the elements of $T$ must be congruent modulo $2 = \gcd(2,b-1) = d$.

Conversely, assume that all of the elements of $T$ are congruent modulo $d$. Note that 
if $T$ has exactly one element, then, by Lemma~\ref{L:bLemma5}, $T$ is $[c,b]$-good. So we may assume that $|T| > 1$.
    
Letting $N = |T|$, assume by induction that any set of fewer than $N$ elements, all of which are congruent modulo $d$, is $[c,b]$-good.  Let $t_1$, $t_2 \in T$ be distinct, and assume without loss of generality that $t_1 > t_2$.  We will construct a function $F$, a finite composition of the functions $I$ and $\Scb$, so that $F(t_1) = F(t_2)$.

Consider the following three cases:
If it is the case that $t_1$ and $t_2$ have the same nonzero digits, we construct $F_1$ so that $F_1(t_1) = F_2(t_2)$. If $t_1 \equiv t_2 \pmod{b-1}$, then we construct $F_2$ so that $F_2(t_1)$ and $F_2(t_2)$ have the same nonzero digits. If it is neither the case that $t_1$ and $t_2$ have the same nonzero digits nor $t_1 \equiv t_2 \pmod{b-1}$, we construct $F_3$ so that $F_3(t_1) \equiv F_3(t_2)\pmod{b-1}$. Composing some or all of these functions will yield the desired function $F$.
    
{\it Case 1.} If $t_1$ and $t_2$ have the same nonzero digits, it follows from the definition of $\Scb$ that $\Scb(t_1) = \Scb(t_2)$.  In this case, let $F=F_1=\Scb$. 

{\it Case 2.} If $t_1 \equiv t_2 \pmod{b-1}$, then there is $v \in \ZZ^+$ so that $t_1-t_2 = (b-1)v$.  Let $r \in \ZZ^+$ be such that
$b^r > bv+t_2-v$ 
so that $b^r > bv$ and $b^r>t_2-v$. Let $m=b^r+v-t_2$ and note that $m>0$.  Then 
    \[I^m(t_1) = t_1 + b^r + v - t_2 = b^r + v +(b-1)v = b^r + bv\]
    and \[I^m(t_2) = t_2 + b^r + v -t_2 = b^r + v.\]
Since $r$ was chosen so that $b^r > bv$, $I^m(t_1)$ and $I^m(t_2)$ must have the same nonzero digits as in Case 1. In this case, let $F_1=\Scb$ and $F_2=I^m$ and let $F=F_1 \circ F_2$.

{\it Case 3.} If neither of the above hold, let $w = t_1-t_2$. We first show that there exists $0 \leq j < b - 1$ such that 
\begin{equation}\label{E:congruence}
2j \equiv -\Scb(w - 1) + c - 1 \pmod{b - 1}
\end{equation}
(where, for convenience, we define $\Scb (0) = 0$).
If $b$ is even, then $2$ and $b-1$ are relatively prime and so $2$ is invertible modulo $b-1$ and such a $j$ clearly exists.  If $b$ is odd, then $d = 2$ and so $w$ is even.  By Lemma~\ref{L:bodd}, 
$\Scb(w - 1) \equiv c + w - 1 \equiv c - 1 \pmod 2$ and so 
$-\Scb(w - 1) + c - 1$ is even.  Thus, there exists $j^\prime\in \ZZ$ such that
$2j^\prime = -\Scb(w - 1) + c - 1$.  Hence there is a $0 \leq j < b$ satisfying~\eqref{E:congruence}.

Now choose $r^{\prime} \in \ZZ^+$ such that $(j+1)b^{r^{\prime}} > t_1$ and let $m^{\prime} = (j+1)b^{r^{\prime}} - t_2 - 1$. Note that $m^\prime \geq 0$. Then, 
    \begin{align*}
    \Scb(t_1+m^{\prime}) &= \Scb((j+1)b^{r^{\prime}} + w -1)\\
    &= (j+1)^2 + \Scb(w-1) \\
    &= (j^2+2j+1)+\Scb(w-1) \\
    &\equiv j^2 + c \pmod{b-1}
    \end{align*}
and
    \begin{align*}
    \Scb(t_2+m^{\prime}) &= \Scb((j+1)b^{r^{\prime}}-1) \\
    &= j^2+(b-1)^2r^{\prime}+c\\
    &\equiv j^2 +c \pmod{b-1}.
    \end{align*}
Therefore $\Scb I^{m^{\prime}}(t_1) \equiv \Scb I^{m^{\prime}}(t_2) \pmod{b-1}$ as in Case 2. In this case, let $F_1=\Scb$, $F_2=I^m$, and $F_3=\Scb I^{m^{\prime}}$ (with $m$ chosen as in Case 2). Further, let $F=F_1 \circ F_2 \circ F_3$.

Therefore, there exists a function $F$, a composition of a finite sequence of $\Scb$ and $I$, such that $F(t_1) = F(t_2)$ and so $|F(T)|<|T|$. By the induction hypothesis, it follows that $F(T)$ is $[c,b]$-good.  Finally, by Lemma~\ref{L:bLemma4}, $T$ is $[c,b]$-good.
\end{proof}

We are now ready to prove the main theorem of this paper.

\begin{proof}[Proof of Theorem~\ref{T:happysequence}]
Let $c$, $b$, and $u$ be given, and let $N \in \ZZ^+$ be arbitrary.  

If $b$ is even, set 
$T = \{t\in \ZZ \mid 1 \leq t \leq N\}$.  By Theorem~\ref{T:bTheorem1}, since $d = 1$, $T$ is 
$[c,b]$-good.  Thus there exist $n$, $k\in \Znn$ such that, for each $t\in T$, $\Scb^k(t + n) = u$.  Hence the set 
$\{t + n\in \ZZ \mid 1 \leq t \leq N\}$ is a sequence of $N$ consecutive $u$-attracted numbers.

If $b$ is odd, then let $T = \{2t\in \ZZ \mid 1 \leq t \leq N\}$.  Then by Theorem~\ref{T:bTheorem1}, since $d = 2$, $T$ is $[c,b]$-good and, as above, the set 
$\{2t + n\in \ZZ \mid 1 \leq t \leq N\}$ is a sequence of $N$ $2$-consecutive $u$-attracted numbers.
\end{proof}

\section{Special Cases of Conjecture~\ref{C:happysequencebetter}}
\label{S:conj}

By Theorem~\ref{T:happysequence} if $b$ is odd, then there are arbitrarily long finite $2$-consecutive sequences of $u$-attracted numbers. By Lemma~\ref{L:bodd}, if, in addition, $c$ is even, then there cannot exist any nontrivial consecutive sequences of $u$-attracted numbers. This leaves the existence of such sequences undetermined in the case of both $c$ and $b$ odd.

In this section, we prove that Conjecture~\ref{C:happysequencebetter} holds for values of $c$ and $b$ both less than 10.

\begin{theorem}\label{T:conjsmall}
Let $1\leq c \leq 9$ and $3\leq b\leq 9$   be odd and let $u\in U_{[c,b]}$.  Then there exist arbitrarily long finite sequences of consecutive $u$-attracted numbers for $S_{[c,b]}$. 
\end{theorem}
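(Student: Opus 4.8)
The plan is to reduce the theorem to a single lemma: for each odd $c,b<10$ and each cycle $u$ of $\Scb$, the singleton set $\{1\}$ can be joined to arbitrarily long blocks of consecutive integers inside a $[c,b]$-good set, by adapting the machinery of Theorem~\ref{T:bTheorem1} to the stronger notion of ``$[c,b]$-good via steps that preserve consecutiveness.'' More concretely, I would introduce, in parallel with Definition~\ref{D:cbGood}, the notion that a finite set $T$ is \emph{$[c,b]$-good} in the strong sense used here, and then establish the analogue of Theorem~\ref{T:bTheorem1} with the congruence condition modulo $d=\gcd(2,b-1)=2$ \emph{removed}: for these particular small odd pairs $(c,b)$, \emph{every} finite set of positive integers of the correct residue is $[c,b]$-good, and moreover one can pass between residue classes. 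The key new input, beyond Lemmas~\ref{L:bLemma4} and~\ref{L:bLemma5}, is a case-by-case verification that for each $(c,b)$ in the table there is some small integer $a$ with $\Scb(a)$ of each parity (equivalently, that the image of $\Scb$ meets both residue classes mod $2$), which is exactly what fails when $c$ is even; this is why we restrict to $c$ odd.

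The concrete steps, in order, would be: (1) For a target length $N$, start from the block $T_0=\{1,2,\dots,N\}$. (2) Run the Case~1/Case~2/Case~3 reduction of Theorem~\ref{T:bTheorem1}, applied repeatedly to collapse $T_0$ down under a composition $F$ of $\Scb$'s and $I$'s until it becomes a singleton $\{t^\ast\}$; crucially, the operations $I$ and $\Scb$ map a set of consecutive (difference-$1$) integers to a set whose elements remain ``synchronized'' in the sense needed by Lemma~\ref{L:bLemma4}, so that $[c,b]$-goodness of $F(T_0)$ pulls back to $[c,b]$-goodness of $T_0$. But Case~3 of that proof only produces agreement modulo $b-1$, which for odd $b$ is the even modulus $2\mid b-1$; here is where the hypothesis ``$c$ odd'' does real work. (3) Using Lemma~\ref{L:bodd} with $c$ odd, observe that $\Scb$ toggles parity, so a single extra application of $\Scb$ moves any residue class mod $2$ to the other; combined with the explicit cycles listed in Table~\ref{T:bcycles}, one checks for each of the $20$ pairs $(c,b)$ that \emph{both} parity classes actually reach the given cycle $u$. (4) Conclude that $\{t^\ast\}$, hence $T_0$, hence the shifted block $\{1+n,\dots,N+n\}$, is $[c,b]$-good, i.e.\ a genuine consecutive run of $u$-attracted numbers.

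The hard part, and the reason the theorem is stated only for $c,b<10$ rather than all odd $c,b$, is precisely step (3): one needs, for \emph{every} cycle element $u\in U_{[c,b]}$ and \emph{every} parity class, an explicit integer that $\Scb$-iterates to $u$. For a single fixed residue this is automatic from Theorem~\ref{T:bTheorem1}; the obstruction is showing one can always cross between the two residue classes mod $2$ and still land in the prescribed cycle. I expect this to be handled by brute inspection of Table~\ref{T:bcycles}: for each $(c,b)$, exhibit one small even-valued and one small odd-valued integer feeding into each labeled cycle $C_i$, using Lemma~\ref{L:bodd} to track parities and Lemma~\ref{L:bDescent} to bound the search. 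The remaining steps are then routine rephrasings of the arguments already given for Theorem~\ref{T:happysequence}, with $d=1$ effectively restored because the extra parity-flipping move removes the mod-$2$ constraint. A subtlety to flag: one must make sure the ``consecutiveness-preserving'' reductions never merge two elements of the starting block prematurely in a way that breaks the final count of $N$; this is guaranteed because $F$ is applied to the whole block at once and Lemma~\ref{L:bLemma4} is about the block's image, not about collapsing within it.
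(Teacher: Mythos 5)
There is a genuine gap, and it sits at the heart of your step (2). When $b$ is odd, Lemma~\ref{L:bodd} gives $\Scb(a)\equiv c+a\pmod 2$, so for \emph{any} composition $F$ of the maps $\Scb$ and $I$ one has $F(t_1)-F(t_2)\equiv t_1-t_2\pmod 2$. Consequently two integers of opposite parity can never be merged by such an $F$, and the block $T_0=\{1,\dots,N\}$ can never be collapsed to a singleton; indeed, by the ``only if'' direction of Theorem~\ref{T:bTheorem1}, $T_0$ is provably \emph{not} $[c,b]$-good for any odd $b$, no matter what $c$ is. Your step (3) does not repair this: the fact that $\Scb$ toggles parity when $c$ is odd toggles the parities of \emph{all} elements simultaneously, so an even and an odd element remain of opposite parity after every application of $\Scb$ and can never coincide. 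Your step (4), which concludes that the shifted block is $[c,b]$-good, therefore contradicts Theorem~\ref{T:bTheorem1}. The underlying conflation is between ``$[c,b]$-good'' (a \emph{uniform} $k,n$ with $\Scb^k(t+n)=u$ for all $t$) and the statement of the theorem, which only requires each $t+n$ to be $u$-attracted, with an iteration count that may depend on $t$.

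The missing idea is the relaxation the paper makes in Definition~\ref{D:cbCycleGood}: instead of steering all of $T$ to the single point $u$, one steers all of $T$ into the \emph{cycle} containing $u$ (landing at possibly different points of that cycle), which suffices because any point of the cycle eventually iterates to $u$. Since $c$ and $b$ odd forces all cycles to have even length (Theorem~\ref{T:cbodd}), a cycle can simultaneously absorb integers of both parities. Concretely, the paper splits $T$ into its even part $T_e$ and odd part $T_o$, applies Theorem~\ref{T:bTheorem1} to each (each is legitimately $[c,b]$-good since its elements agree mod $2$), and thereby reduces $T$ to an explicit two-element set $V_j=\{v,\,\text{odd element of }\Ucb\}$; a finite computation (your step (3) is the right instinct here, but applied to these two-element sets rather than to preimages of $u$) then shows each $V_j$ can be steered into every cycle, and Lemma~\ref{L:bLemma4cycle} pulls this back to $T$. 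Without replacing ``good'' by ``cycle-good,'' the argument cannot be salvaged.
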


By Theorem~\ref{T:bTheorem1}, no set containing even two consecutive integers can be
$[c,b]$-good.  Hence, to prove Theorem~\ref{T:conjsmall}, we need a new, similar, property, which we define below.

\begin{definition}\label{D:cbCycleGood}
A finite set, $T$, is {\it $[c,b]$-cycle-good} if, for each cycle $C$ of $\Scb$, there exists $n, k \in \ZZ^+$ such that for all $t \in T$, $\Scb^k(t+n) \in C$.
\end{definition}

Note that any $[c,b]$-good set is necessarily $[c,b]$-cycle-good, but not the converse.
We need the following analog of Lemma~\ref{L:bLemma4}, with  $[c,b]$-cycle-good in place of $[c,b]$-good.  Its proof completely parallels that of Lemma~\ref{L:bLemma4} and so is omitted. 

\begin{lemma}\label{L:bLemma4cycle}
Let $F: \ZZ^+ \rightarrow \ZZ^+$ be the composition of a finite sequence of the functions, $\Scb$ and $I$, where $I: \ZZ^+ \rightarrow \ZZ^+$ is defined by $I(t) = t+1$.  If $F(T)$ is $[c,b]$-cycle-good, then $T$ is $[c,b]$-cycle-good.
\end{lemma}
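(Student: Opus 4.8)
The plan is to mirror the proof of Lemma~\ref{L:bLemma4} line for line, with ``$[c,b]$-good'' replaced by ``$[c,b]$-cycle-good'' and ``$u\in\Ucb$'' replaced by ``cycle $C$ of $\Scb$'' throughout. Since the given $F$ is, by hypothesis, a composition of finitely many copies of $I$ and $\Scb$, it suffices to handle the two base cases $F=I$ and $F=\Scb$ and then induct on the number of factors in $F$, peeling off one factor at a time: if $F = G\circ H$ with $H\in\{I,\Scb\}$ and the claim is known for $G$, then $G(H(T))$ is $[c,b]$-cycle-good, whence $H(T)$ is $[c,b]$-cycle-good by the base case, whence $T$ is $[c,b]$-cycle-good.

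For the base case $F=I$: assume $I(T)$ is $[c,b]$-cycle-good and fix a cycle $C$ of $\Scb$. Choose $n'$, $k\in\ZZ^+$ with $\Scb^k(t+1+n')\in C$ for all $t\in T$, and set $n=n'+1\in\ZZ^+$; then $\Scb^k(t+n)\in C$ for all $t\in T$, so $T$ is $[c,b]$-cycle-good. For the base case $F=\Scb$: assume $\Scb(T)$ is $[c,b]$-cycle-good and fix a cycle $C$. Choose $n'$, $k'\in\ZZ^+$ with $\Scb^{k'}(\Scb(t)+n')\in C$ for all $t\in T$. Let $r$ be the number of base-$b$ digits of the largest element of $T$, and let $n$ be the base-$b$ integer consisting of $n'$ copies of the digit $1$ followed by $r$ copies of the digit $0$. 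Since $r$ is at least the digit-length of every $t\in T$, the addition $t+n$ introduces no carry into the top $n'$ positions, so the base-$b$ digits of $t+n$ are exactly those of $t$ together with $n'$ extra digits equal to $1$; hence $S_{2,b}(t+n)=S_{2,b}(t)+n'$ and therefore $\Scb(t+n)=\Scb(t)+n'$ for every $t\in T$ simultaneously. Taking $k=k'+1\in\ZZ^+$ gives $\Scb^{k}(t+n)=\Scb^{k'}(\Scb(t+n))=\Scb^{k'}(\Scb(t)+n')\in C$ for all $t\in T$, so $T$ is $[c,b]$-cycle-good.

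I expect no genuine obstacle here beyond bookkeeping, which is why the authors omit it. The one point needing a moment's care is the digit-shift identity $\Scb(t+n)=\Scb(t)+n'$, which must hold for all $t\in T$ at once; this is exactly what forces the choice of $r$ as the digit-length of the largest element of $T$, precisely as in Lemma~\ref{L:bLemma4}. One should also check that the constants produced lie in $\ZZ^+$ as Definition~\ref{D:cbCycleGood} requires---this holds since $n=n'+1\geq 2$ in the first case, and in the second case $n$ is a positive base-$b$ integer (as $n'\geq 1$) and $k=k'+1\geq 2$. Finally, note that fixed points of $\Scb$ are simply cycles of length one, so they are covered by the same argument with no modification.
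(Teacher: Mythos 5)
Your proposal is correct and is exactly the argument the paper intends: the authors omit the proof precisely because it ``completely parallels that of Lemma~\ref{L:bLemma4},'' and your adaptation reproduces that proof line for line, including the digit-shift construction of $n$ and the check that the constants remain in $\ZZ^+$ as Definition~\ref{D:cbCycleGood} requires.
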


\begin{proof}[Proof of Theorem~\ref{T:conjsmall}]
First note that if $\Scb$ has only one cycle, then for each $u\in \Ucb$, every positive integer is $u$-attracted.  Hence, the theorem holds in these cases.  Thus, using Table~\ref{T:bcycles}, it remains to prove the conjecture for each $[c,b]$ in the set
\[A = \left\{[5,3], [1,7], [3,7], [5,7], [5,9], [7,9],[9,9]\right\}.
\]

Fix $[c,b]\in A$ and let $T$ be a nonempty finite set of positive integers.  We now prove that $T$ is $[c,b]$-cycle-good.  

To set notation, let $T_e$ be the set of all even elements of $T$ and let $T_o$ be the set of all odd elements of $T$.  We assume that neither $T_e$ nor $T_o$ is empty, since, otherwise, by Theorem~\ref{T:bTheorem1}, $T$ is $[c,b]$-good and thus $[c,b]$-cycle-good.  Let the constant $v$ and the sets $V_j$ be as given in Table~\ref{T:conjpf2}, and let $\ell$ be the length of $C_1$, the cycle of $\Scb$ containing $v$ (as seen in Table~\ref{T:bcycles}).

\begin{table}[H]
\begin{center}
{\small
\begin{tabular}{|c|c|l|}\hline
$[c,b]$&$v$&$V_j$\\
\hline\hline
$[5,3]$& 20 & $V_1 = \{20,100\}$, $V_2 = \{20,21\}$, $V_3 = \{20,111\}$\\
\hline
$[1,7]$& 13 & $V_1 = \{13,14\}$, $V_2 = \{13,30\}$, $V_3 = \{13,50\}$ \\
\hline
$[3,7]$& 44 & $V_1 = \{44,25\}$, $V_2 = \{44,50\}$, $V_3 = \{44,61\}$, \\
& & $V_4 = \{44,104\}$  \\
\hline
$[5,7]$& 6 & $V_1 = \{6,10\}$,  $V_2 = \{6, 21\}$, $V_3 = \{6, 25\}$, \\
& & $V_4 = \{6, 34\}$, $V_5 = \{6, 56\}$, $V_6 = \{6, 111\}$\\
\hline
$[5,9]$& 37 & $V_1=\{37, 25\}$, $V_2 = \{37, 34\}$, $V_3=\{37, 45\}$, \\
& & $V_4=\{37, 61\}$, $V_5=\{37, 63\}$, $V_6=\{37, 70\}$ \\
& & $V_7=\{37, 81\}$, $V_8=\{37, 124\}$, $V_9=\{37, 157\}$\\
\hline
$[7,9]$& 8 &$V_1 = \{8,25\}$, $V_2 = \{8,36\}$, $V_3 = \{8,45\}$, \\
& & $V_4 = \{8,78\}$, $V_5 = \{8,100\}$, $V_6 = \{8,106\}$\\
\hline
$[9,9]$& 15 & $V_1 =\{15, 12\}$, $V_2=\{15, 32\}$, $V_3=\{15, 38\}$ \\
\hline
\end{tabular}
\caption{Values for the proof of Theorem~\ref{T:conjsmall}
\label{T:conjpf2}}}
\end{center}
\end{table}

By Theorem~\ref{T:bTheorem1}, the set $T_e$ is $[c,b]$-good.  Thus, there exist positive integers $n_1$ and $k_1$ such that for each $t\in T_e$, $S_{[c,b]}^{k_1}(t + n_1) = v$.  Let $T^\prime = \{S_{[c,b]}^{k_1}(t + n_1) \mid t \in T_o\}$.  It follows from Lemma~\ref{L:bodd} that the elements of $T^\prime$ are all congruent modulo 2.  Hence, by Theorem~\ref{T:bTheorem1}, the set $T^\prime$ is also $[c,b]$-good.  Thus, there exist positive integers $n_2$ and $k_2$ such that for each $t\in T^\prime$, $S_{[c,b]}^{k_2}(t + n_2) = v$.  Combining these results, we find that
\[S_{[c,b]}^{k_2}\left(S_{[c,b]}^{k_1}(T + n_1) + n_2\right) = \left\{v,S_{[c,b]}^{k_2}(v + n_2)\right\},\]
where, again using Lemma~\ref{L:bodd}, since $v$ is even, $S_{[c,b]}^{k_2}(v + n_2)$ is odd.  Let $k_3$ be a multiple of $\ell$, sufficiently large so that $S_{[c,b]}^{k_2 + k_3}(v + n_2) \in U_{[c,b]}$.  Then 
\[S_{[c,b]}^{k_2 + k_3}\left(S_{[c,b]}^{k_1}(T + n_1) + n_2\right) = \left\{v,S_{[c,b]}^{k_2 + k_3}(v + n_2)\right\} = V_j,\]
for some $j$.

\begin{table}[tbh]
\begin{center}
{\small
\begin{tabular}{|c|c|c|c|c|}\hline
$[c,b]$&$k_1,n_1$&$k_2,n_2$&$k_3,n_3$&$k_4,n_4$\\
\hline\hline
$[5,3]$&$0,0$&$0,1$&$2,11120200$ & -\\
\hline
$[1,7]$&$0,0$&$1,1111111$ & - & -\\
\hline
$[3,7]$&$0,0$&$0,30$ & - & - \\
\hline
$[5,7]$&$0,0$&$1,3$&$1,121303$& - \\
\hline
$[5,9]$&$0,0$&$2,81$&$5,11$&$1,156155$ \\
\hline
$[7,9]$&$3,2$&$6,131$&$4,135$&$3,13$\\
\hline
$[9,9]$&$0,0$&$3,218$ & - & -\\
\hline
\end{tabular}
\caption{$\Scb^{k_i}(V_1 + n_i) \subseteq C_i$
\label{T:conjpf3}}}
\end{center}
\end{table}

{\small \begin{table}[H]
\begin{center}
\begin{tabular}{|c|c|c|c|c|c|c|c|c|}\hline
$[c,b]$&$k_2^\prime,n_2^\prime$&$k_3^\prime,n_3^\prime$&$k_4^\prime,n_4^\prime$&$k_5^\prime,n_5^\prime$&$k_6^\prime,n_6^\prime$&$k_7^\prime,n_7^\prime$&$k_8^\prime,n_8^\prime$&$k_9^\prime,n_9^\prime$\\
\hline\hline
$[5,3]$&$2,1202$&$2,12112$&&&&&& \\
\hline
$[1,7]$&$1,0$&$5,14$&&&&&& \\
\hline
$[3,7]$&$3,0$ &$5,131$&$3,3$&&&&&\\
\hline
$[5,7]$&$9,1$&$5,16$&$6,114$&$7,0$&$6,114$&&& \\
\hline
$[5,9]$&$9,212$&$4,7$&$6,22$&$5,147$&$7,0$& $1,212$&$6,147$&$7,32$\\
\hline
$[7,9]$&$9,3$&$6,150$&$4,16$&$11,31$&$9,31$&&&\\
\hline
$[9,9]$&$7,6$&$3,0$&&&&&&  \\
\hline
\end{tabular}
\caption{$\Scb^{k_j^\prime}(V_j + n_j^\prime) \subseteq V_1$
\label{T:conjpf4}}
\end{center}
\end{table}}

By Lemma~\ref{L:bLemma4cycle}, to prove that $T$ is $[c,b]$-cycle-good, it suffices to prove that each of the $V_j$ is $[c,b]$-cycle-good.  To each $C_i$, we associate a pair $(k_i,n_i)$, as given in Table~\ref{T:conjpf3} and to each $V_j$, we associate a pair $(k_j^\prime,n_j^\prime)$, as in Table~\ref{T:conjpf4}.

To see that $V_1$ is [c,b]-cycle-good, fix a cycle, $C_i$, of $\Scb$ and note that for each $t \in V_1$, $\Scb^{k_i}(t + n_i) \in C_i$. Thus $V_1$ is $[c,b]$-cycle-good.  Now, fix $j$ such that $V_j \neq V_1$.  A direct calculation shows that for each $t \in V_j$, $\Scb^{k_j^\prime}(t + n_j^\prime) \in V_1$.  Since $V_1$ is $[c,b]$-cycle-good, so is $V_j$.  Thus, by Lemma~\ref{L:bLemma4cycle}, $T$ is $[c,b]$-cycle-good.

Hence, for each $c$ and $b$ as in the theorem, every finite set is $[c,b]$-cycle-good.  Considering the specific sets $T_N = \{1,2,\dots,N\}$ completes the proof.
\end{proof}

\vfill

\end{document}